\newcommand{\cdummy}{\cdot}
\newcommand{\tmop}[1]{\ensuremath{\operatorname{#1}}}
\newcommand{\tmtextrm}[1]{{\rmfamily{#1}}}
\theoremstyle{plain}
\newtheorem{lemma}{Lemma}
\newtheorem{theorem}{Theorem}
\numberwithin{equation}{section}
\begin{document}

\title{Analytic solutions and Singularity formation for the Peakon b--Family
equations}

\author{G. M. Coclite         \and
        F. Gargano \and
	V. Sciacca
}
\address[G. M. Coclite ]{Department of Mathematics, University of Bari, via E. Orabona 4, 70125 Bari (Italy)}
\email[]{coclitegm@dm.uniba.it}
\address[F. Gargano ]{Department of Mathematics and Computer Science, University of Palermo, via Archirafi 34, 90123 Palermo (Italy)}
\email[]{gargano@math.unipa.it}
\address[V. Sciacca ]{Department of Mathematics and Computer Science, University of Palermo, via Archirafi 34, 90123 Palermo (Italy)}
\email[]{sciacca@math.unipa.it}

\maketitle

\begin{abstract}
Using the Abstract Cauchy-Kowalewski Theorem we prove that the $b$-family
equation admits, locally in
time, a unique analytic solution.
Moreover, if the initial data is real analytic and it belongs to $H^s$ with $s >
3/2$,
and the momentum density $u_0 - u_{0,{xx}}$ does
not change sign, we prove that the solution stays analytic globally in time, for
$b\geq 1$.
Using pseudospectral numerical methods, we study, also, the singularity
formation for the $b$-family equations
with the singularity tracking method. This method allows us to
follow the process of the singularity formation in the complex plane
as the singularity approaches the real axis, estimating the rate of decay of the
Fourier spectrum.

\keywords{Spectral Analysis \and Complex singularities \and $b$-family equation
\and Analytic Solution \and Abstract Cauchy-Kowalewski theorem}
\end{abstract}

\section{Introduction}
\label{intro}
In 2003 Holm and Staley \cite{HS1,HS2} introduced, as a one-dimensional version
of active fluid transport, the {\em $b$-family equation}
\begin{equation}
\label{eq:0}
m_t+\underbrace{u m_x}_{\rm convection}+\underbrace{b m u_x}_{\rm
stretching}=0,\qquad  t > 0,\> x \in{\mathbb{R}},
\end{equation}
where $b$ is a real parameter, $u=u(x,t)$ the fluid velocity, and $m=u-u_{xx}$
the momentum density. 
The quadratic terms in \eqref{eq:0} represent the competition and balance, in
fluid convection 
between nonlinear transport and amplification due to $b$-dimensional stretching.
The parameter $b$ characterizes the equations because it is the ratio of
stretching to convective transport and also  the number of covariant dimensions
associated with 
the momentum density $m$.

Within this family of evolutionary equations we have both the Camassa-Holm
equation \cite{CH,J} in correspondence of $b$= 2 
and the Degasperis-Procesi equation \cite{DP} in correspondence of $b$ = 3 as
special cases. 
Using the method of asymptotic integrability, it has been shown that these are
the unique two cases in which 
\eqref{eq:0} is  completely integrable \cite{DP}.
A quite complete list of references on the Camassa-Holm, the
Degasperis-Procesi equations and the b--family equations can be found in \cite{CHK,CK06,JNZ12,Z10,ZC11,Z04}.

We can rewrite \eqref{eq:0} is the following third order form in terms only of
the unknown $u$
\begin{equation}
\label{eq:1}
u_t- u_{txx} + (b + 1)uu_x = bu_xu_{xx} + uu_{xxx},\qquad  t > 0,\> x
\in{\mathbb{R}}.
\end{equation}
Moreover, \eqref{eq:0} and \eqref{eq:1} are also equivalent to the following
hyperbolic elliptic system
\begin{equation}
\label{eq:1.1}
u_t + uu_x +P_x= 0,\qquad -P_{xx}+P= \frac{b}{2} u^2 + \frac{3 - b}{2} u_x^2.
\end{equation}
Since $e^{-|x|}/2$ is the Green's function of the Helmholtz operator
$1-\partial_x^2$ we can rewrite \eqref{eq:1.1} in the following
integro-differential form
\begin{equation*}
u_t + uu_x +\int_{\mathbb{R}} \frac{e^{-|x-y|}}{2}{\rm sign}(y-x)\left(
\frac{b}{2} u^2(y,t) + \frac{3 - b}{2} u_x^2(y,t)\right)dy=0.
\end{equation*}
 
 In this paper we are concerned with the initial value problem associated to
\eqref{eq:1}, therefore we augment the equation with  the initial condition
 \begin{equation}
 \label{eq:2}
 u(x,0)=u_0(x),\qquad x\in {\mathbb{R}}.
 \end{equation}
 Assuming that
 \begin{equation*}
 u_0\in H^s(\mathbb{R}), \>s>\frac32, \qquad \text{$u_0-u_{0,xx}$ has constant
sign}
 \end{equation*}
using the Kato's theory and giving a precise description of the blow-up
scenario, Gui, Liu, and Tian proved the existence of global in time solutions.

The rest of the paper is organized as follows. In Section \ref{functionalsett}
we introduce the functional spaces in which our results hold.
Section \ref{local} is devoted to the local in time existence of solutions for
the Cauchy problem associated to \eqref{eq:1}.
The existence of global in time analytical solution for \eqref{eq:1} is proved
in Section \ref{global}.
In Section \ref{singularity} we analyze  the singularity formation using the
singularity tracking method for different values of  $b>-1$.

\section{Local in time result}

\subsection{Functional settings and the ACK Theorem}
\label{functionalsett}

In this section we introduce the functional spaces where local in time
analyticity for the $b$-family equations will be proved (see
{\cite{LCS03}}{\cite{LSS05}}{\cite{SC98}}). We consider $x$ a complex variable, with $\Re x$
and $\Im x$ respectively 
its real and imaginary part. We consider $f (x)$ a complex function analytic in
\begin{equation}
  D (\rho) = \left\{ x \in \mathbb{C} \hspace{0.25em} : | \Im x| \leq \rho
  \right\} \label{strip}
\end{equation}
and $f \in L^2  (\Gamma (\Im x))$ for $\Im x \in (- \rho, \rho)$. The
path where the $L^2$ integration is performed is:
\begin{equation}
  \Gamma (b) = \left\{ x \in \mathbb{C} \hspace{0.25em} : | \Im x| = b
  \right\} . \label{path}
\end{equation}
This means that if $\Im x$ is inside $(- \rho, \rho)$, then $f (\Re x + i \Im
x)$ is a square integrable function of $\Re x$. We denote by $H^{0, \rho}$ the
set of complex functions $f (x)$ with the above properties and such that
\begin{equation}
|f|_{\rho} = \sup_{\Im x \in (- \rho, \rho)} \|f (\cdummy + i \Im x)\|_{L^2
   (\Gamma (\Im x))} < \infty 
\end{equation}
is satisfied.

We denote by $H^{k, \rho}$ the space of all complex functions $f (x)$ such
that
\begin{equation} 
\partial_x^j f \in H^{0, \rho}, \hspace{1em} {\rm for \hspace{1em} 0
   \leq j \leq k ; \hspace{2em} and \hspace{1em} \|f\|_{k, \rho} = \sum_{0
   \leq j \leq k} | \partial_x^i f|_{\rho} .} 
\end{equation}
In term of the Fourier transform $\hat{f} (\xi)$ of $f$, the previous
norm can be rewritten as
\begin{equation}
  \label{normH} \|f\|_{k, \rho} = \left( \int e^{2 \rho | \xi |} (1 + | \xi
  |^2)^k | \hat{f} (\xi) |^2 d \xi \right)^{1 / 2} .
\end{equation}
We denote by $H^{1,\rho}_{\beta, T}$ the space of all functions $f (x, t) \in
C^1 ([0,
T], H^{1, \rho})$ and
\[ |f|_{1, \rho, \beta, T} = \sup_{0 \leq t \leq T} |f (\cdummy, t) |_{0, \rho
   - \beta t} + \sup_{0 \leq t \leq T} | \partial_t f (\cdummy, t) |_{0, \rho
   - \beta t} + \sup_{0 \leq t \leq T} | \partial_x f (\cdummy, t) |_{0, \rho
   - \beta t} \]
is finite. In an analogous way, $H^{k, \rho}_{\beta, T}$ is the space of all
functions \\$f \in C^k ([0, T], H^{k, \rho})$ and $ |f|_{k, \rho, \beta, T}  <
\infty $.

We conclude this section recalling the ACK theorem in the form given by Safonov
\cite{SAF95}. 

Consider the problem
\[ u = F (u, t), \]
where $u \in X_{\rho}$ and $X_{\rho}$ is a Banach space with norm denoted by
$| \cdummy |_{\rho}$.
\begin{theorem}\label{ACKt}
ACK Theorem: Suppose that $\exists R > 0$, $\rho_0 > 0$, and
$\beta_0 > 0$ such that if $0 < t \leq \rho_0 / \beta_0$, the following
properties hold:
\begin{enumerate}
  \item for all $0 < \rho' < \rho \leq \rho_0$ and $u \in X_{\rho}$ such that
  $\sup_{0 \leq t \leq T} |u (\cdummy, t) |_{\rho} \leq R$, the map $F (u, t)
  : [0, T] \mapsto X_{\rho'}$ is continuous;
  
  \item for all $0 < \rho < \rho_0$ the function
  \[ F (0, t) : [0, \rho_0 / \beta_0] \mapsto \{u \in X_{\rho} :  \sup_{0 \leq
     t \leq T} |u (\cdummy, t) |_{\rho} \leq R\} \]
  is continous and $|F (0, t) |_{\rho} \leq R_0 < R$;
  
  \item for all $0 < \rho' < \rho (s) \leq \rho_0$ and for all $w$ and $u \in
  \{u \in X_{\rho} : \\ \sup_{0 \leq t \leq T} |u (\cdummy, t) |_{\rho - \beta_0
  t} \leq R\}$, results
  \[ |F (u, t) - F (w, t) |_{\rho'} \leq C \int_0^t \frac{|u - w|_{\rho
     (s)}}{\rho (s) - \rho'} ds ; \]
\end{enumerate}
then there exists $\beta > \beta_0$ such that for all $0 < \rho < \rho_0$
equation () has a unique solution $u (\cdummy, t) \in X_{\rho}$ with $t \in
[0, (\rho_0 - \rho) / \beta_0]$ and moreover \\ $\sup_{\rho < \rho_0 - \beta t}
|u (\cdummy, t) |_{\rho} \leq R$.
\end{theorem}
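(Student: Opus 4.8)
The plan is to solve the fixed-point equation $u = F(u,t)$ by Picard iteration, setting $u^{(0)} \equiv 0$ and $u^{(n+1)} = F(u^{(n)}, \cdummy)$, and to prove convergence in a weighted norm tailored to the shrinking analyticity radius. Concretely, I would fix $\beta > \beta_0$ (to be taken large) and introduce, for functions $v(\cdummy,t)$ lying in $X_\rho$ whenever $\rho + \beta t < \rho_0$, the norm
\[ \|v\|_\beta = \sup\{ (\rho_0 - \rho - \beta t)\, |v(\cdummy,t)|_\rho : 0 < \rho < \rho_0,\ \rho + \beta t < \rho_0 \}. \]
The weight $(\rho_0 - \rho - \beta t)$ is precisely what is needed to turn the singular kernel $1/(\rho(s)-\rho')$ appearing in hypothesis (3) into an integrable quantity.

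The heart of the argument is a single integral estimate. Fixing $\rho'$ and $t$ with $\rho' + \beta t < \rho_0$, I am free, by hypothesis (3), to evaluate the integrand at any radius $\rho(s) \in (\rho', \rho_0 - \beta s)$; I would choose the midpoint $\rho(s) = \tfrac12(\rho' + \rho_0 - \beta s)$, for which both $\rho(s)-\rho'$ and $\rho_0 - \rho(s) - \beta s$ equal $\tfrac12(\rho_0 - \rho' - \beta s)$. Substituting the bound $|u-w|_{\rho(s)} \le \|u-w\|_\beta / (\rho_0 - \rho(s) - \beta s)$ into hypothesis (3) and integrating then gives
\[ |F(u,t) - F(w,t)|_{\rho'} \le 4C\,\|u-w\|_\beta \int_0^t \frac{ds}{(\rho_0 - \rho' - \beta s)^2} \le \frac{4C}{\beta}\cdot\frac{\|u-w\|_\beta}{\rho_0 - \rho' - \beta t}. \]
Multiplying by the weight and taking the supremum over $\rho'$ and $t$ yields $\|F(u,\cdummy) - F(w,\cdummy)\|_\beta \le (4C/\beta)\,\|u-w\|_\beta$, so for $\beta > 4C$ the map is a strict contraction and Banach's fixed-point theorem produces a unique solution, defined for $t < (\rho_0 - \rho)/\beta$ at each radius $\rho$.

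Two bookkeeping points remain. First, to run the iteration legitimately I must keep every iterate inside the ball $\{ \sup_t |u(\cdummy,t)|_{\rho - \beta_0 t} \le R \}$ on which hypothesis (3) is assumed to hold; this is where the gap $\beta > \beta_0$ is used, since the faster contraction radius $\rho - \beta t$ sits strictly inside $\rho - \beta_0 t$ and leaves room to absorb the geometric series. Using hypothesis (2), $\|\delta^{(0)}\|_\beta = \|F(0,\cdummy)\|_\beta \le \rho_0 R_0$ for $\delta^{(n)} = u^{(n+1)} - u^{(n)}$, and the contraction gives $\|\delta^{(n)}\|_\beta \le (4C/\beta)^n \rho_0 R_0$; summing and using $R_0 < R$ shows the pointwise bound stays below $R$ on the relevant region, after restricting the time interval to $[0,\rho_0/\beta_0]$ as in the hypotheses. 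Second, hypothesis (1) guarantees that each $F(u^{(n)},\cdummy)$ is continuous into $X_{\rho'}$, so the limit inherits the regularity $u(\cdummy,t)\in X_\rho$ and solves the equation, while uniqueness is immediate from the contraction.

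I expect the main obstacle to be exactly this simultaneous maintenance of the a priori ball bound, which is needed to invoke the Lipschitz estimate (3), together with the contraction estimate itself: the two requirements pull against each other near the edge $\rho + \beta t = \rho_0$, where the weighted bound degenerates. The midpoint choice of $\rho(s)$ and the slack $\beta - \beta_0$ are the two devices that make them compatible, and verifying that the pointwise bound never exceeds $R$ throughout the iteration is the delicate quantitative step.
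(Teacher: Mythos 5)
First, note that the paper does not prove this statement at all: Theorem~\ref{ACKt} is recalled verbatim (in somewhat garbled form) from Safonov \cite{SAF95}, so there is no in-paper proof to compare against, and your proposal must stand on its own. Its core mechanics are sound and standard: the weighted norm $\|v\|_\beta=\sup(\rho_0-\rho-\beta t)|v(\cdummy,t)|_\rho$, the midpoint choice $\rho(s)=\tfrac12(\rho'+\rho_0-\beta s)$, and the resulting contraction factor $4C/\beta$ are exactly the classical Nirenberg--Nishida/Safonov devices, and your computation of the integral estimate is correct.

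However, there is a genuine gap, and it sits precisely at the step you flag as ``delicate'' but then claim to have dispatched. To apply hypothesis (3) to the pair $(u^{(n)},u^{(n-1)})$ along the midpoint curve, you must first know that the iterates satisfy the pointwise ball bound $\leq R$ at the radii $\rho(s)$ actually used. Your proposed mechanism --- summing $\|\delta^{(n)}\|_\beta\leq(4C/\beta)^n\rho_0 R_0$ and invoking $R_0<R$ --- does not deliver this: unwinding the weight gives only
\begin{equation*}
  |u^{(n)}(\cdummy,t)|_\rho \;\leq\; R_0 \;+\; \frac{(4C/\beta)\,\rho_0 R_0}{(1-4C/\beta)\,(\rho_0-\rho-\beta t)},
\end{equation*}
which blows up as $\rho+\beta t\to\rho_0$, so the bound exceeds $R$ near the edge of the cone --- exactly where the midpoint curve passes (for target points $(\rho',t)$ near the edge, $\rho_0-\rho(s)-\beta s=\tfrac12(\rho_0-\rho'-\beta s)$ is small for $s$ near $t$). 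Hence the precondition of (3) is unverified for the iterates, the Lipschitz estimate cannot legitimately be invoked there, and the concluding uniform bound $\sup_{\rho<\rho_0-\beta t}|u(\cdummy,t)|_\rho\leq R$ is not established. No choice of curve repairs this within your scheme: any $\rho(s)$ confined to the $\beta$-cone forces $\int_0^t ds/(\rho(s)-\rho')$ to diverge logarithmically at the edge, so ball invariance cannot be closed in the weighted norm alone. The standard proofs resolve this by a different induction --- pointwise estimates on the differences $\delta^{(n)}$ with weights (or a nested sequence of cones with slopes increasing from $\beta_0$ to $\beta$) chosen so that the \emph{sum} of the differences stays below $R-R_0$ pointwise on the cone, with the hypothesis ball taken along the $\beta_0$-radii $\rho-\beta_0 t$. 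Tellingly, your estimates never actually use $\beta_0$ or the slack $\beta>\beta_0$, despite your remark that this slack ``absorbs the geometric series''; that absorption is the missing argument, not a bookkeeping point.
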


\subsection{First main theorem: short time existence of analytic solution}
\label{local}

In this section we proof the following first main theorem: 

\begin{theorem}\label{primoteo} 
Local in time analyticity: Let the initial data for the $b$-family $u_0 \in
H^{1, \rho_0}$. Then there exists $\beta > 0$ such that for any $\rho$, with
$0 < \rho < \rho_0$, there exists a unique continuously differentiable w.r.t.
time solution $u$ of the $b$-family equation with the following property:
\begin{itemize}
  \item $u (\cdummy, t) \in H^{1, \rho}$ and $\partial_t u (\cdummy, t) \in
  H^{1, \rho}$, when $t \in \left[ 0, \frac{\rho_0 - \rho}{\beta} \right]$.
\end{itemize}
\end{theorem}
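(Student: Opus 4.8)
The plan is to recast the Cauchy problem \eqref{eq:1}--\eqref{eq:2} in the integral form $u=F(u,t)$ demanded by \thmref{ACKt}, taking the Banach space to be $X_\rho=H^{1,\rho}$. Starting from the hyperbolic--elliptic formulation \eqref{eq:1.1} and inverting the Helmholtz operator with its Green's function $e^{-|x|}/2$, I would integrate the evolution equation in time to obtain
\[
F(u,t)=u_0-\int_0^t\Big[u\,\partial_x u+\partial_x(1-\partial_x^2)^{-1}\big(\tfrac b2\,u^2+\tfrac{3-b}2\,(\partial_x u)^2\big)\Big]\,ds,
\]
whose kernel $\tfrac12\,\mathrm{sign}(y-x)e^{-|x-y|}$ is exactly the one appearing in the integro-differential form of the equation. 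Fixed points of $F$ are then precisely the solutions of the $b$-family equation with datum $u_0$, and the whole proof reduces to verifying the three hypotheses of the ACK theorem for this $F$ on the scale $\{X_\rho\}_{0<\rho\le\rho_0}$.

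Three estimates in the analytic classes \eqref{normH} drive everything. First, $H^{1,\rho}$ is a Banach algebra and, more precisely, $\|fg\|_{0,\rho}\le C\|f\|_{1,\rho}\|g\|_{0,\rho}$; both follow by writing the product as a convolution of the weighted transforms, using $e^{\rho|\xi|}\le e^{\rho|\xi-\eta|}e^{\rho|\eta|}$, Young's inequality, and the Cauchy--Schwarz bound $\int e^{\rho|\xi|}|\hat f|\,d\xi\le C\|f\|_{1,\rho}$. Second, the Fourier multiplier $i\xi/(1+\xi^2)$ of $\partial_x(1-\partial_x^2)^{-1}$ is bounded and decays like $|\xi|^{-1}$, so this operator maps $H^{s,\rho}$ into $H^{s+1,\rho}$, i.e.\ it gains one derivative. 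Third, and crucially, the analytic weight yields the Cauchy-type estimate $\|\partial_x f\|_{0,\rho'}\le\tfrac{C}{\rho-\rho'}\|f\|_{0,\rho}$ for $\rho'<\rho$, since $|\xi|e^{-(\rho-\rho')|\xi|}\le C/(\rho-\rho')$; this is exactly the loss of one derivative encoded by the factor $1/(\rho(s)-\rho')$ in hypothesis (3) of \thmref{ACKt}.

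Hypothesis (2) is immediate, because $F(0,t)=u_0$, so $|F(0,t)|_\rho=\|u_0\|_{1,\rho}\le\|u_0\|_{1,\rho_0}=:R_0$ and one fixes any $R>R_0$. Hypothesis (1), the continuity of $t\mapsto F(u,t)$ into $X_{\rho'}$, follows from the fundamental theorem of calculus once the integrand is shown to lie in $H^{0,\rho'}$ with a bound uniform in $t$, which the estimates above provide. The real work is hypothesis (3). Writing $u\partial_x u-w\partial_x w=\tfrac12\partial_x\big((u-w)(u+w)\big)$ and $(\partial_x u)^2-(\partial_x w)^2=(\partial_x u-\partial_x w)(\partial_x u+\partial_x w)$, I would estimate $F(u,t)-F(w,t)$ in $H^{1,\rho'}$ term by term. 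The convective difference and the $u^2$ part of the pressure cost one derivative, recovered by the Cauchy estimate against $1/(\rho(s)-\rho')$ and controlled by the algebra property together with the a priori bound $R$. The delicate term is $\tfrac{3-b}2(\partial_x u)^2$: since $\partial_x u$ lies only in $H^{0,\rho}$, the mixed algebra bound cannot be applied directly, and one must spend the single available derivative by first using the Cauchy estimate to place $\partial_x(u-w)$ in $H^{1,\rho'}$ and then exploiting the one-derivative smoothing of $\partial_x(1-\partial_x^2)^{-1}$. Balancing these two gains correctly is the main obstacle, and it is what dictates the precise form of the loss factor.

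Collecting these bounds gives, for $u,w$ in the ball of radius $R$,
\[
|F(u,t)-F(w,t)|_{\rho'}\le C\int_0^t\frac{|u-w|_{\rho(s)}}{\rho(s)-\rho'}\,ds,
\]
which is precisely hypothesis (3). \thmref{ACKt} then furnishes some $\beta>0$ and, for every $0<\rho<\rho_0$, a unique $u(\cdummy,t)\in H^{1,\rho}$ solving $u=F(u,t)$ on $\big[0,(\rho_0-\rho)/\beta\big]$ with $\sup|u(\cdummy,t)|_\rho\le R$. Finally, differentiating the integral equation in $t$ and using that the constructed solution is analytic in a strip strictly larger than $\rho$ at interior times (so that $u\,\partial_x u$ and the pressure term belong to $H^{1,\rho}$) shows that $u$ is $C^1$ in time with $\partial_t u(\cdummy,t)\in H^{1,\rho}$, which is the asserted conclusion.
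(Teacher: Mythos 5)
Your proposal is correct and takes essentially the same route as the paper: the identical integral reformulation $u=F(u,t)$ with $F(u,t)=u_0-\int_0^t\left[uu_x+A^{-2}\left(\tfrac{b}{2}u^2+\tfrac{3-b}{2}u_x^2\right)_x\right]dt'$, checked against Safonov's ACK theorem using the Cauchy estimate in the analytic norms (the $1/(\rho-\rho')$ loss) together with the boundedness/smoothing of $A^{-2}$. The paper only sketches this, deferring the details to the Camassa--Holm case in \cite{LSS05}; your write-up fills in the same estimates more explicitly, with your product decompositions playing the role of the paper's lemma on $\|uw_x-wu_x\|_{1,\rho'}$.
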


\begin{proof}

We apply the abstract Cauchy-Kovalevskaya (ACK) theorem to the
$b$-family equation to prove
the short time existence of analytic solution. We remand to \cite{LSS05} for the
details and here we give only a sketch of the proof.

To apply the ACK theorem to the $b$-family equation, we write the $b$-family
equation as:
\begin{equation}
u_t + uu_x = - A^{- 2}  \left( \frac{b}{2} u^2 + \frac{3 - b}{2} u_x^2
\right)_x, \label{Beq1} 
\end{equation}
where $A^2 = (1 - \partial_x^2)$, and, with an integration in
time, we transform \eqref{Beq1} in
\begin{eqnarray}\label{ACKform}
  &  & u = F (u, t), \hspace{2em} \text{\tmtextrm{\tmop{with}}} \\
  \label{Beq} &  & F (u, t) = u_0 - \int_0^t \left[ uu_x + A^{- 2}  \left(
  \frac{b}{2} u^2 + \frac{3 - b}{2} u_x^2 \right)_x \right] dt' . \nonumber
\end{eqnarray}
It is clear that properties
1 and 2 of the ACK theorem are satisfied by the operator $F (u, t)$ given in
\eqref{ACKform}. For
the validity of hypothesis 3, one considers first the norm $|F (u, t) |_{1,
\rho'}$, where the bound is easily obtained using the following estimates
which are a direct consequence of the Cauchy estimate on the derivative of
analytic function (\cite{LCS03}{\cite{SC98}}):

\begin{lemma} Let $u, w \in H^{1, \rho''}$ and $\rho' < \rho''$, then
$ \|uw_x - wu_x \|_{1, \rho'} \leq c \frac{\|u - w\|_{1, \rho''}}{\rho'' -
   \rho'} $.
\end{lemma}
Finally, the bound on $| \partial_x F (u, t) |_{1, \rho'}$ is obtained
using the fact that the operator $A^{-2}$ is bounded. 
\end{proof}

\section{Global in time analyticity}
\label{global}

In this section we prove that the $b$-family equation admits a strong solution
in Gevrey-class functional space of index 1. This analysis gives an explicit
lower bound on the shrinking of the radius of analyticity of the solution of
the $b$-family equation with given analytical initial data, and assuming
suitable regularity in Sobolev space.

We consider the space $D (A^r e^{\rho A})$, which consists of the real
restriction of complex analytic functions on the strip $D (\rho)$. The 
operator $A^r e^{\rho A}$ is the Gevrey operator with $A^2 = (1 -
\partial_x^2)$.  
The space $D (A^r e^{\rho A})$ is equivalent to the space $H^{r,\rho}$ 
defined in section \ref{functionalsett}, with norm given by \eqref{normH}, then
we define 
the Gevrey class $G^1$ of index 1 by (see {\cite{LO97}}):
\begin{equation}
 G^1 = \bigcup_{\rho > 0} H^{r, \rho} .
\end{equation}
We suppose that the initial data $u_0 \in D (A^r e^{\rho_0 A})$, with 
$\rho_0$ and $r$ positive. 

In this section we prove the following second main theorem:

\begin{theorem}\label{seconteo} Global in time analyticity: Let $b \geq 1$ and
$A^2 = (1 -
\partial_x^2)$. Let
$u_0 \in D (A^r e^{\rho_0 A})$, with $r > 3 / 2$, $\rho_0 > 0$ and $m_0 = u_0
- u_{0 xx}$ does not change sign. Then the unique solution $u$ of the
$b$-family equation lies in Gevrey class of index 1 globally in time.
\end{theorem}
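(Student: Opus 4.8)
The plan is to promote the short-time analyticity of \thmref{primoteo} to a statement valid for all $t\ge 0$ by running a Gevrey energy estimate with a \emph{time-dependent} radius of analyticity $\rho(t)$, driven by the global Sobolev theory that our hypotheses provide. The assumptions $b\ge 1$ and ``$m_0=u_0-u_{0xx}$ does not change sign'' enter only through this Sobolev theory: by the Kato-type result recalled in the Introduction (Gui--Liu--Tian), they guarantee that the $H^s$ solution, $s>3/2$, exists for all time and that no wave breaking occurs, so that
\[
\int_0^T \|u_x(t)\|_{L^\infty}\,dt<\infty
\qquad\text{and}\qquad
\sup_{0\le t\le T}\|u(t)\|_{H^s}<\infty
\quad\text{for every }T>0 .
\]
The analytic estimate below is otherwise insensitive to the value of $b$, so the theorem reduces to showing that the radius $\rho(t)$, which is positive for short time, remains positive on every bounded interval.

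First I would fix $r>3/2$, take the still-unknown decreasing function $\rho(t)>0$, and differentiate the Gevrey norm along the flow written in the form \eqref{Beq1}. Since $A=(1-\partial_x^2)^{1/2}$ is self-adjoint and commutes with $\partial_x$,
\[
\tfrac12\frac{d}{dt}\|A^r e^{\rho A}u\|_{L^2}^2
=\dot\rho\,\|A^{r+1/2}e^{\rho A}u\|_{L^2}^2
+\big\langle A^r e^{\rho A}u_t,\,A^r e^{\rho A}u\big\rangle ,
\]
and I substitute $u_t=-uu_x-A^{-2}\big(\tfrac b2 u^2+\tfrac{3-b}2 u_x^2\big)_x$. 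Because $A^{-2}\partial_x$ gains one derivative and $H^{r-1,\rho}$ is a Banach algebra for $r>3/2$, the nonlocal term is estimated by a tame product inequality (one factor measured in $L^\infty\subset H^s$), giving a contribution bounded by $C\|u\|_{H^s}\|A^r e^{\rho A}u\|_{L^2}^2$. Writing the transport term as $uu_x=u\,\partial_x u$, its symmetric part is integrated by parts,
\[
-\big\langle u\,\partial_x(A^r e^{\rho A}u),\,A^r e^{\rho A}u\big\rangle
=\tfrac12\big\langle u_x,\,|A^r e^{\rho A}u|^2\big\rangle
\le\tfrac12\|u_x\|_{L^\infty}\|A^r e^{\rho A}u\|_{L^2}^2 ,
\]
and the remainder is the commutator $[A^r e^{\rho A},u]\,\partial_x u$.

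The commutator is the heart of the matter, and the main obstacle: a crude algebra bound would cost a full extra derivative, $\|A^{r+1/2}e^{\rho A}u\|_{L^2}^2$, with a merely bounded coefficient, and cancelling it by a choice $\dot\rho=-C$ would force the radius down \emph{linearly}, yielding only a finite analyticity lifespan. The point I would exploit is that the dangerous part of the commutator comes from $[e^{\rho A},u]$, whose leading contribution carries an explicit factor $\rho$ (formally $[e^{\rho A},u]\approx\rho\,[A,u]e^{\rho A}$, a smoothing of order zero in the high-frequency factor). A sharp Gevrey commutator estimate, in the spirit of the Cauchy-estimate lemma used for \thmref{primoteo}, then produces
\[
\big|\big\langle [A^r e^{\rho A},u]\,\partial_x u,\,A^r e^{\rho A}u\big\rangle\big|
\le C\rho\,\|u_x\|_{L^\infty}\,\|A^{r+1/2}e^{\rho A}u\|_{L^2}^2
+C\|u\|_{H^s}\,\|A^r e^{\rho A}u\|_{L^2}^2 ,
\]
the crucial feature being the factor $\rho$ multiplying the top-order term.

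Collecting everything yields $\tfrac12\frac{d}{dt}\|A^r e^{\rho A}u\|_{L^2}^2\le(\dot\rho+C\rho\,\|u_x\|_{L^\infty})\|A^{r+1/2}e^{\rho A}u\|_{L^2}^2+C\|u\|_{H^s}\|A^r e^{\rho A}u\|_{L^2}^2$, so I choose the \emph{multiplicative} law $\dot\rho=-C\|u_x\|_{L^\infty}\,\rho$, that is
\[
\rho(t)=\rho_0\,\exp\!\Big(-C\!\int_0^t\|u_x(s)\|_{L^\infty}\,ds\Big) .
\]
By the no-wave-breaking bound this stays strictly positive on every $[0,T]$, the top-order bracket is then nonpositive, and Gronwall's inequality bounds the Gevrey norm by $\|u_0\|_{r,\rho_0}\exp\big(C\int_0^t\|u(s)\|_{H^s}\,ds\big)$, finite for every finite $t$. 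Since $\rho(t)>0$ and $\|A^r e^{\rho(t)A}u(t)\|_{L^2}<\infty$ for all $t\ge0$, we conclude $u(\cdot,t)\in H^{r,\rho(t)}\subset G^1$, which is the asserted global-in-time membership in the Gevrey class of index $1$. Everything hinges on the commutator estimate retaining the explicit factor $\rho$ on the top-order term; without it the radius would be driven to zero in finite time and only local analyticity would follow.
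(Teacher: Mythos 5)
Your overall skeleton coincides with the paper's actual proof: an energy estimate for \eqref{Beq1} in the Gevrey norm $\|A^r e^{\rho A}u\|$, a decreasing radius $\rho(t)$ chosen so that the top-order term $\|A^{r+1/2}e^{\rho A}u\|^2$ is absorbed by $\dot\rho$, and the hypotheses $b\ge 1$, $m_0$ of one sign entering only through the Gui--Liu--Tian global $H^s$ theory (the paper's Lemma~\ref{lemma3}). However, the two tame estimates your argument rests on are genuine gaps. For the nonlocal term you claim a product bound with one factor measured merely in $L^\infty\subset H^s$; this fails for exponentially weighted norms, because $e^{\rho\langle\xi\rangle}\le e^{\rho|\xi-\eta|}\,e^{\rho\langle\eta\rangle}$ forces the analytic weight onto \emph{both} factors of the convolution. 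The correct bound is the algebra property of Lemma~\ref{lemma1}, $\|A^{r-1}e^{\rho A}(u_x^2)\|\le C\|A^{r}e^{\rho A}u\|^2$, so this term is \emph{cubic} in the Gevrey norm, not $C\|u\|_{H^s}\|A^r e^{\rho A}u\|^2$; the paper then splits the cube via the second inequality of Lemma~\ref{lemma1} plus interpolation into $C_1\|A^r u\|^3+C_2\rho\|A^r e^{\rho A}u\|\,\|A^{r+1/2}e^{\rho A}u\|^2$. More seriously, your key commutator estimate with top-order coefficient $C\rho\|u_x\|_{L^\infty}$ is over-claimed for the same reason: the low-frequency factor $\hat u(\xi-\eta)$ necessarily carries the weight $e^{\rho|\xi-\eta|}$, so what the Cauchy-estimate circle of ideas actually delivers is the paper's Lemma~\ref{lemma2} (Bona--Gruji\'c, Lombardo--Sammartino--Sciacca), whose top-order coefficient is the Gevrey norm $C_2\rho\|A^r e^{\rho A}u\|$, not $\rho\|u_x\|_{L^\infty}$. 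Commutator bounds with only $\|\nabla u\|_{L^\infty}$ at top order do exist in the literature (Kukavica--Vicol for Euler), but they require genuinely different machinery and are nowhere near ``in the spirit of the Cauchy estimate''; you correctly identified that everything hinges on this point, and as stated it does not hold.

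The gap is reparable, and the repair is precisely the paper's proof: keep Lemmas~\ref{lemma1} and~\ref{lemma2}, arrive at the differential inequality \eqref{lastineq}, and choose $\rho$ so that $\dot\rho+C_2\rho\|A^r e^{\rho A}u\|\le 0$. This choice is self-consistent: once the top-order bracket is nonpositive, $\frac{d}{dt}\|A^r e^{\rho A}u\|^2\le 2C_1\|A^r u\|^3$ bounds the Gevrey norm by $\|A^r e^{\rho_0 A}u_0\|+C_1\int_0^t\|A^r u\|^3\,dt''$, and solving the resulting scalar ODE gives the exponential law \eqref{least}, whose exponent is driven by the Gevrey datum and $\int\|A^r u\|^3$ rather than by $\int\|u_x\|_{L^\infty}$ as in your proposal. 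Since Lemma~\ref{lemma3} keeps $\|A^r u\|$ finite on every bounded time interval, $\rho(t)>0$ for all $t$, which is all the theorem requires; the sharper decay rate of the radius you were aiming for is not needed for the conclusion.
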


\begin{proof}
We start the
energy type estimate on $D (A^r e^{\rho A})$ for the $b$-family equation.
Supposing that $\rho$ is a decreasing, positive $C^1$ function of the time $t$
with $\rho (0) = \rho_0$, we consider the $L^2$-scalar product of the
$b$-family equation \eqref{Beq1} with $A^{2 r - 2} e^{2 \rho A} u$:
\begin{eqnarray}
  &  & \frac{1}{2}  \frac{d}{dt}  \|A^r e^{\rho A} u\|^2 - \dot{\rho}  \|A^{r
  + 1 / 2} e^{\rho A} u\|^2 = \nonumber\\
  &  & = | \langle A^r e^{\rho A} uu_x, A^r e^{\rho A} u \rangle + \langle
  A^{r - 1} e^{\rho A}  \left( \frac{b}{2} u^2 + \frac{3 - b}{2} u_x^2
  \right)_x, A^{r - 1} e^{\rho A} u \rangle | \nonumber\\
  &  & \leq | \langle A^r e^{\rho A} uu_x, A^r e^{\rho A} u \rangle | +
  \frac{|b|}{2} | \langle A^{r - 1} e^{\rho A} \left( u^2 \right)_x, A^{r - 1}
  e^{\rho A} u \rangle | \nonumber\\
  &  & \hspace{1em} + \frac{|b - 3|}{2} | \langle A^{r - 1} e^{\rho A} \left(
  u_x^2 \right)_x, A^{r - 1} e^{\rho A} u \rangle | .  \label{Zero}
\end{eqnarray}
Now, we start to estimate the three terms on the second member of
(\ref{Zero}),\\
$I_1=| \langle A^r e^{\rho A} uu_x, A^r e^{\rho A} u \rangle |$, 
$I_2=| \langle A^{r - 1} e^{\rho A} \left( u^2 \right)_x, A^{r - 1} e^{\rho
  A} u \rangle |$ and \\
$I_3=| \langle A^{r - 1} e^{\rho A} \left( u_x^2 \right)_x, A^{r - 1}
  e^{\rho A} u \rangle |$.\\
From Lemma \ref{lemma2} below, we have
\begin{equation}
  I_1 \leq C_1  \|A^r
  u\|^3 + C_2 \rho \|A^r e^{\rho A} u\|  \|A^{r + 1 / 2} e^{\rho A} u\|^2 .
\label{I1}
\end{equation}
Using Cauchy estimate and Lemma \ref{lemma1} below we obtain:
\begin{eqnarray}
  &  & I_2 \leq \langle A^r e^{\rho A} u^2, A^{r - 1} e^{\rho A} u
  \rangle | \leq \|A^r e^{\rho A} u^2 \|  \|A^{r - 1} e^{\rho A} u\| \\
\nonumber
  &  & \hspace{1em} \leq  \|A^r e^{\rho A} u\|^2  \|A^{r - 1} e^{\rho A} u\|
\leq \|A^r e^{\rho A} u\|^3 \\ \nonumber
  &  & \hspace{1em} 
  \leq C_1  \|A^r u\|^3 + \rho C_2  \|A^{r + 1 / 3} e^{\rho
  A} u\|^3 \\ \nonumber
  &  & \hspace{1em}
\leq C_1  \|A^r u\|^3 + \rho C_2  \left( \|A^r e^{\rho A}
  u\|^{1 / 3} \|A^{r + 1 / 2} e^{\rho A} u\|^{2 / 3} \right)^3 \\ \nonumber
  &  & \hspace{1em} \leq C_1  \|A^r u\|^3 + \rho C_2  \|A^r e^{\rho A} u\| 
  \|A^{r + 1 / 2} e^{\rho A} u\|^2 ; \label{I2}
\end{eqnarray}
\begin{eqnarray}
  &  & I_3 \leq \langle A^{r - 1} e^{\rho A} u_x^2, A^r e^{\rho A} u
  \rangle | \leq \|A^{r - 1} e^{\rho A} u_x^2 \|  \|A^r e^{\rho A}
  u\| \\ \nonumber
  &  & \hspace{1em} \leq \|A^{r - 1} e^{\rho A} u_x \|^2  \|A^r e^{\rho A}
  u\| \leq \|A^r e^{\rho A} u\|^3 \\ \nonumber
  &  & \hspace{1em} \leq C_1  \|A^r u\|^3 + \rho C_2  \|A^{r + 1 / 3} e^{\rho
  A} u\|^3  \\ \nonumber
  &  & \hspace{1em} \leq C_1  \|A^r u\|^3 + \rho C_2  \left( \|A^r e^{\rho A}
  u\|^{1 / 3} \|A^{r + 1 / 2} e^{\rho A} u\|^{2 / 3} \right)^3 \\ \nonumber
  &  & \hspace{1em} \leq C_1  \|A^r u\|^3 + \rho C_2  \|A^r e^{\rho A} u\| 
  \|A^{r + 1 / 2} e^{\rho A} u\|^2 .\label{I3}
\end{eqnarray}
Collecting all the previous estimates \eqref{I1},\eqref{I2} and \eqref{I3}, we
obtain
\begin{equation} 
\frac{1}{2}  \frac{d}{dt}  \|A^r e^{\rho A} u\|^2 \leq \left( \dot{\rho} +
   C_2 \rho \|A^r e^{\rho A} u\| \right)  \|A^{r + 1 / 2} e^{\rho A} u\|^2 +
   C_1  \|A^r u\|^3 , \label{lastineq} 
\end{equation}
where constants $C_1$ and $C_2$ depend only on $b$ and $\rho$. The last
inequality \eqref{lastineq} implies that $u \in D (A^r e^{\rho A})$ with
\begin{equation}
 \rho (t) = \rho_0 e^{- C_2  \int_0^t \left( \|A^r e^{\rho_0 A} u_0 \|+ C_1
   \left( \int_0^{t'} \|A^r u\|^3 dt^" \right) \right) dt'} . \label{least}
\end{equation}

To prove the global in time analyticity of the solution $u$, it is necessary
to have a priori Sobolev estimate on $\|A^r u\|$. To do that we use the global
existence result for the $b$-family equation, given by the result of Lemma \eqref{lemma3} below. 
\end{proof}

\begin{lemma}\label{lemma1} 
Let $\rho \geq 0$ and let $u, v \in D (A^r e^{\rho
A})$ with $r > 1 / 2$; and let $w \in D (A^{r + s} e^{\rho A})$, with $\rho,
r, s \geq 0$. There exists constants $C, C_1$ and $C_2$, depending only on
$r$, such that
\[ \|A^r e^{\rho A} (uv)\| \leq C \|A^r e^{\rho A} u\|  \|A^r e^{\rho A} v\| ;
\]
\[ \|A^r e^{\rho A} w\| \leq C_1  \|A^r w\| + \rho C_2  \|A^{r + s} e^{\rho A}
   w\| . \]
where $\| \cdot \|$ is the usual $L^2$ norm.
\end{lemma}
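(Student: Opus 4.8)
The plan is to prove both inequalities by passing to the Fourier side, where $A^r e^{\rho A}$ acts as multiplication by the symbol $m_r(\xi)=(1+\xi^2)^{r/2}e^{\rho\langle\xi\rangle}$, with $\langle\xi\rangle:=\sqrt{1+\xi^2}$, and then reducing each estimate to an elementary bound for the symbol combined with a classical Sobolev fact. This keeps the analytic weight and the Sobolev weight cleanly separated.

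For the algebra estimate I would first record the subadditivity of the analytic weight: since $\langle\xi\rangle\le\langle\xi-\eta\rangle+\langle\eta\rangle$ for all $\xi,\eta$ (equivalently $\sqrt{1+(a+b)^2}\le\sqrt{1+a^2}+\sqrt{1+b^2}$ for $a,b\ge 0$, which follows by squaring), one gets the pointwise bound $e^{\rho\langle\xi\rangle}\le e^{\rho\langle\xi-\eta\rangle}e^{\rho\langle\eta\rangle}$. Writing $\widehat{uv}=\hat u * \hat v$ and setting the nonnegative functions $P(\xi):=e^{\rho\langle\xi\rangle}|\hat u(\xi)|$ and $Q(\xi):=e^{\rho\langle\xi\rangle}|\hat v(\xi)|$, this yields $e^{\rho\langle\xi\rangle}\,|(\hat u*\hat v)(\xi)|\le (P*Q)(\xi)$. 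Introducing $p,q$ with $\hat p=P$ and $\hat q=Q$ (so that $\|A^r p\|=\|A^r e^{\rho A}u\|$ and $\|A^r q\|=\|A^r e^{\rho A}v\|$), the left-hand side is dominated in $L^2$ by $\|(1+\xi^2)^{r/2}(P*Q)\|_{L^2}=\|A^r(pq)\|$, and the claim then reduces exactly to the classical Banach-algebra property $\|A^r(pq)\|\le C\|A^r p\|\,\|A^r q\|$, valid for $r>1/2$ via the embedding $H^r\hookrightarrow L^\infty$. The constant $C$ depends only on $r$, as asserted.

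For the second inequality everything reduces to the scalar bound
\[
(1+\xi^2)^{r/2}e^{\rho\langle\xi\rangle}\le C_1\,(1+\xi^2)^{r/2}+C_2\,\rho\,(1+\xi^2)^{(r+s)/2}e^{\rho\langle\xi\rangle}
\]
holding for every $\xi$; squaring it and integrating against $|\hat w(\xi)|^2$ gives the stated norm inequality. The key elementary fact is $e^x-1=\int_0^x e^t\,dt\le xe^x$, applied with $x=\rho\langle\xi\rangle$, which produces the clean case $s=1$ with $C_1=C_2=1$ since $\langle\xi\rangle(1+\xi^2)^{r/2}=(1+\xi^2)^{(r+1)/2}$; and for $s\ge 1$ the bound persists because $(1+\xi^2)^{(r+1)/2}\le(1+\xi^2)^{(r+s)/2}$. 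For $0\le s<1$ I would instead split into the regimes $\rho\langle\xi\rangle\le 1$ and $\rho\langle\xi\rangle>1$: on the first, $e^{\rho\langle\xi\rangle}\le e$ is absorbed into the $C_1$-term, while on the second the inequality $\rho\langle\xi\rangle>1$ lets one compare the two exponential factors directly.

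I expect the genuine content to sit in two places. The multiplicative estimate hinges entirely on the analytic weight being submultiplicative under convolution — the subadditivity of $\langle\cdot\rangle$ — after which one is handed the textbook $H^r$ algebra property; this is routine, but the restriction $r>1/2$ is essential and cannot be dropped. The more delicate point is the second inequality for $0\le s<1$, which is the regime actually used (with $s=1/3$) in the proof of Theorem~\ref{seconteo}: the estimate fails with a constant independent of $\rho$ (fix $\rho\langle\xi\rangle$ and let $\langle\xi\rangle\to\infty$), so the honest statement carries a $\rho$-dependence in $C_2$ there. I would therefore flag this explicitly and let $C_2=C_2(r,\rho)$ in that range, which is consistent with the $\rho$-dependence already acknowledged in the constants appearing in the proof of Theorem~\ref{seconteo}.
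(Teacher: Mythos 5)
Your proof is correct, and since the paper's own ``proof'' of this lemma is just the citation ``See \cite{LO97}'', what you have done is reconstruct, self-contained, essentially the Levermore--Oliver argument: the product estimate via the triangle inequality $\langle\xi\rangle \le \langle\xi-\eta\rangle+\langle\eta\rangle$ (hence submultiplicativity of the weight $e^{\rho\langle\xi\rangle}$ under convolution), followed by the one-dimensional $H^r$ algebra property for $r>1/2$; your device of introducing $p,q$ with $\hat p=e^{\rho\langle\cdot\rangle}|\hat u|$, $\hat q=e^{\rho\langle\cdot\rangle}|\hat v|$ is exactly the standard reduction. Likewise the second estimate via the scalar bound $e^x\le 1+xe^x$ is the $s=1$ case as it appears in \cite{LO97}. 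Two harmless technicalities: Fourier normalization constants in $\widehat{pq}=c\,\hat p * \hat q$ only affect $C$, and in the second inequality you need not square the pointwise bound --- dominating $|\widehat{A^re^{\rho A}w}|$ pointwise and using the triangle inequality in $L^2$ gives the norm inequality directly, without the factor $\sqrt{2}$.

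Your closing observation is the substantive point, and it is right: for $0\le s<1$ the second inequality cannot hold with $C_2$ depending only on $r$. Dividing the scalar inequality by $\langle\xi\rangle^r$ gives $e^x\le C_1+C_2\,x\,\langle\xi\rangle^{s-1}e^x$ with $x=\rho\langle\xi\rangle$; fixing $x$ with $e^x>C_1$ and letting $\langle\xi\rangle\to\infty$ (so $\rho\to 0$) violates it, and concentrating $\hat w$ near that frequency transfers the failure from the symbol to the norms. Your regime-splitting repair yields $C_2\sim\rho^{s-1}$; equivalently, the clean uniform-in-$\rho$ statement replaces $\rho$ by $\rho^s$ in the second term, via the elementary bound $e^x\le 1+x^s e^x$, valid for all $x\ge 0$ and $0\le s\le 1$ (for $x\le 1$ use $x\le x^s$, for $x\ge 1$ use $x^s\ge 1$). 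This is not a pedantic point: the lemma is invoked with $s=1/3$ in the estimates for $I_2$ and $I_3$ in the proof of Theorem~\ref{seconteo}, i.e.\ precisely in the range where the literal statement fails, and the $\rho$-dependence you flag is implicitly conceded there, where the paper declares that $C_1$ and $C_2$ ``depend only on $b$ and $\rho$'' --- in contradiction with the lemma's ``depending only on $r$''. So: correct proof, same method as the cited source for the true part, plus a valid and worthwhile correction to the statement as written.
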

\begin{proof}
See {\cite{LO97}}. 
\end{proof}
\begin{lemma}\label{lemma2} 
Let $v \in D (A^r e^{\rho A})$ and $u \in D (A^{r + 1
/ 2} e^{\rho A})$, with $\rho \geq 0$ and $r > 3 / 2$, then there exists
constants $C_1$ and $C_2$, depending only on $r$, such that
\[ | \langle A^r e^{\rho A} vu_x, A^r e^{\rho A} u \rangle | \leq C_1  \|A^r
   v\|  \|A^r u\|^2 + C_2 \rho \|A^r e^{\rho A} v\|  \|A^{r + 1 / 2} e^{\rho
   A} u\|^2 . \]
\end{lemma}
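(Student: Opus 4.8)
The plan is to pass to Fourier variables, where $A^r e^{\rho A}$ becomes the Fourier multiplier $m_\rho(\xi)=\langle\xi\rangle^{r}e^{\rho\langle\xi\rangle}$ with $\langle\xi\rangle=(1+\xi^2)^{1/2}$, and to exploit the transport structure before estimating. Since $A^r e^{\rho A}$ commutes with $\partial_x$, I would split
\[
A^r e^{\rho A}(vu_x)=v\,\partial_x\!\left(A^r e^{\rho A}u\right)+\left[A^r e^{\rho A},v\right]u_x ,
\]
so that $\langle A^r e^{\rho A}(vu_x),A^r e^{\rho A}u\rangle=T+K$ with $T=\langle v\,\partial_x(A^r e^{\rho A}u),A^r e^{\rho A}u\rangle$ and $K=\langle[A^r e^{\rho A},v]u_x,A^r e^{\rho A}u\rangle$. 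For the transport part I would use the antisymmetry
\[
T=\bigl\langle v\,\partial_x(A^r e^{\rho A}u),\,A^r e^{\rho A}u\bigr\rangle=-\tfrac12\bigl\langle v_x,(A^r e^{\rho A}u)^2\bigr\rangle ,
\]
which transfers the top-order derivative from $u$ onto $v$; hence $|T|\le\tfrac12\|v_x\|_{L^\infty}\|A^r e^{\rho A}u\|^2$. This is exactly where the hypothesis $r>3/2$ is used, through the embedding $H^{r-1}(\mathbb{R})\hookrightarrow L^\infty(\mathbb{R})$, which gives $\|v_x\|_{L^\infty}\le C\|A^r v\|$ with $C$ depending only on $r$.

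To recognize the two summands of the statement in the factor $\|A^r e^{\rho A}u\|^2$, I would use the pointwise weight inequality $e^{2\rho\langle\xi\rangle}\le C\bigl(1+\rho\langle\xi\rangle\,e^{2\rho\langle\xi\rangle}\bigr)$ (split according to $\rho\langle\xi\rangle\lessgtr 1$), which after multiplying by $\langle\xi\rangle^{2r}|\hat u(\xi)|^2$ and integrating yields
\[
\|A^r e^{\rho A}u\|^2\le C\,\|A^r u\|^2+C\rho\,\|A^{r+1/2}e^{\rho A}u\|^2 .
\]
Note this keeps a single power of $\rho$, which is sharper than squaring the second bound of \lemref{lemma1}. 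Combined with the $L^\infty$ estimate on $v_x$, the transport term is then controlled by $C_1\|A^r v\|\,\|A^r u\|^2+C_2\rho\,\|A^r v\|\,\|A^{r+1/2}e^{\rho A}u\|^2$, which is already of the required form since $\|A^r v\|\le\|A^r e^{\rho A}v\|$.

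The step I expect to be the main obstacle is the commutator remainder $K$, which is a Gevrey-weighted Kato--Ponce estimate and governs the interaction in which $v$ carries high frequency (where a crude absolute-value bound on the convolution would lose a full derivative instead of the half-derivative the statement allows). The key is the symbol-difference bound: for $\zeta$ between $\xi$ and $\eta$,
\[
|m_\rho(\xi)-m_\rho(\eta)|\le|\xi-\eta|\,\sup_\zeta|m_\rho'(\zeta)|,\qquad
|m_\rho'(\zeta)|\le C\bigl(\langle\zeta\rangle^{r-1}+\rho\,\langle\zeta\rangle^{r}\bigr)e^{\rho\langle\zeta\rangle}.
\]
Here differentiating $m_\rho$ produces precisely two contributions: the polynomial part $\langle\zeta\rangle^{r-1}$ loses one derivative and, after pairing the compensating derivative onto $v$ and using the $L^\infty$ embedding again, produces the $\rho$-free Sobolev term $C_1\|A^r v\|\|A^r u\|^2$; the exponential part carries the prefactor $\rho$ together with $\langle\zeta\rangle^{r}$, and the extra power is absorbed into $\|A^r e^{\rho A}v\|$, giving the term $C_2\rho\,\|A^r e^{\rho A}v\|\,\|A^{r+1/2}e^{\rho A}u\|^2$. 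I would justify the transfer of the exponential weight across the convolution by the subadditivity $\langle\xi\rangle\le\langle\xi-\eta\rangle+\langle\eta\rangle$, so that $e^{\rho\langle\xi\rangle}\le e^{\rho\langle\xi-\eta\rangle}e^{\rho\langle\eta\rangle}$.

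Finally I would collect $T$ and $K$, bound each resulting trilinear integral by Cauchy--Schwarz in the output frequency and Young's convolution inequality in the remaining variable, and read off the two factors $\|A^r u\|$, $\|A^{r+1/2}e^{\rho A}u\|$ and the two $v$-norms in the stated form, the multiplicative estimates being those of \lemref{lemma1}. All constants arise from the embedding constant and the elementary weight inequalities and therefore depend only on $r$, as claimed. Specializing to $v=u$ then furnishes the bound \eqref{I1} used in the proof of \thmref{seconteo}.
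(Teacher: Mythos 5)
Your reduction of the transport part $T$ is correct and is the standard mechanism (the paper itself gives no proof of Lemma~\ref{lemma2}, deferring to \cite{BG03} and \cite{LSS05}, whose arguments follow the skeleton you chose), but your treatment of the commutator $K$ has a genuine gap at exactly the point where the precise form of the lemma lives. The mean value theorem gives $|m_\rho'(\zeta)|\le C\left(\langle\zeta\rangle^{r-1}+\rho\langle\zeta\rangle^{r}\right)e^{\rho\langle\zeta\rangle}$, and you claim the polynomial part produces the $\rho$-free term $C_1\|A^rv\|\,\|A^ru\|^2$. It cannot: that part still carries the Gevrey weight $e^{\rho\langle\zeta\rangle}$, which is $\ge 1$ and unbounded, so it cannot be discarded; after transferring it across the convolution by subadditivity you obtain at best a term of the shape $C\|A^re^{\rho A}v\|\,\|A^re^{\rho A}u\|^2$ \emph{without} a factor $\rho$. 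This is not dominated by the right-hand side of the lemma: your own weight inequality reduces it to $C\|A^re^{\rho A}v\|\,\|A^ru\|^2+C\rho\|A^re^{\rho A}v\|\,\|A^{r+1/2}e^{\rho A}u\|^2$, and the first summand is not controlled by $C_1\|A^rv\|\,\|A^ru\|^2$, since $\|A^re^{\rho A}v\|$ can exceed $\|A^rv\|$ by an arbitrary factor (and splitting it again by Lemma~\ref{lemma1} costs extra derivatives of $v$ that the hypothesis $v\in D(A^re^{\rho A})$ does not provide). The missing idea, used in the cited proofs, is to split the weight \emph{before} estimating: write $e^{\rho A}=1+(e^{\rho A}-1)$ and use the pointwise bound $e^{x}-1\le x\,e^{x}$. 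The unweighted piece reduces to the classical Kato-type commutator/cancellation estimate in $H^r$ with $r>3/2$, which is where $C_1\|A^rv\|\,\|A^ru\|^2$ actually comes from; the $(e^{\rho A}-1)$ piece carries the explicit factor $\rho$ together with one extra derivative, to be shared as two half-derivatives over the two $u$ factors, yielding $C_2\rho\|A^re^{\rho A}v\|\,\|A^{r+1/2}e^{\rho A}u\|^2$.

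A secondary but also real problem: the blanket bound $|m_\rho(\xi)-m_\rho(\eta)|\le|\xi-\eta|\sup_\zeta|m_\rho'(\zeta)|$ is too lossy precisely in the regime you yourself flag as the main obstacle. When $v$ carries the high frequency, $|\xi-\eta|\gtrsim|\eta|$, the supremum along the segment is of order $\langle\xi-\eta\rangle^{r-1}\left(1+\rho\langle\xi-\eta\rangle\right)e^{\rho\langle\xi-\eta\rangle}e^{\rho\langle\eta\rangle}$, and the extra factor $|\xi-\eta|$ from the mean value theorem then forces a weight $\rho\langle\xi-\eta\rangle^{r+1}e^{\rho\langle\xi-\eta\rangle}$ onto $\hat v$, i.e., a $\|A^{r+1}e^{\rho A}v\|$-type norm --- a full derivative more than allowed. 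One needs the standard frequency case split: for $|\xi-\eta|\le|\eta|/2$ apply the mean value theorem with $\langle\zeta\rangle\sim\langle\eta\rangle$, while for $|\xi-\eta|>|\eta|/2$ bound $|m_\rho(\xi)-m_\rho(\eta)|\le m_\rho(\xi)+m_\rho(\eta)$ directly, letting $v$ absorb the top weight without the extra factor $|\xi-\eta|$. With these two repairs your outline becomes essentially the argument of \cite{BG03,LSS05}.
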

\begin{proof} 
See {\cite{BG03}} and {\cite{LSS05}}.
\end{proof}

\begin{lemma}\label{lemma3} 
Let $b \geq 1$ and let $u_0 \in H^s (
\mathbb{R}) \cap L^1 ( \mathbb{R})$, with $s > 3 / 2$ and $m_0 = u_0 - u_{0
xx}$ does not change sign, then the corresponding solution does exists
globally. 
\end{lemma}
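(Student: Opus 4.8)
The plan is to combine the local theory already available from Kato's approach with a wave-breaking blow-up criterion and two time-uniform a priori bounds extracted from the sign condition on the momentum. First I would record the local statement: for $u_0\in H^s$ with $s>3/2$ there is a maximal existence time $T^\ast>0$ and a unique solution $u\in C([0,T^\ast);H^s)\cap C^1([0,T^\ast);H^{s-1})$, together with the standard blow-up alternative asserting that if $T^\ast<\infty$ then $\int_0^{T^\ast}\|u_x(\cdot,t)\|_{L^\infty}\,dt=+\infty$ (equivalently $\liminf_{t\to T^\ast}\min_x u_x(\cdot,t)=-\infty$). Consequently it suffices to produce a bound on $\|u_x(\cdot,t)\|_{L^\infty}$ that is independent of $t\in[0,T^\ast)$.

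Next I would exploit the transport structure of $m=u-u_{xx}$. Let $q(x,t)$ solve $\partial_t q=u(q,t)$, $q(x,0)=x$; since $s>3/2$ forces $u(\cdot,t)\in C^1$, the map $x\mapsto q(x,t)$ is a diffeomorphism with $q_x(x,t)=\exp\!\bigl(\int_0^t u_x(q(x,\sigma),\sigma)\,d\sigma\bigr)>0$. Writing the equation as $m_t+um_x=-b\,mu_x$ and differentiating $m(q(x,t),t)\,q_x(x,t)^b$ in $t$, I would verify that it is constant along trajectories, i.e. $m(q(x,t),t)\,q_x(x,t)^b=m_0(x)$. Because $q_x>0$, the sign of $m(\cdot,t)$ equals that of $m_0$ for every $t\in[0,T^\ast)$, so assume without loss of generality $m_0\ge 0$ and hence $m(\cdot,t)\ge 0$.

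The two bounds then follow. Integrating the equation and using $\int u m_x\,dx=-\int u_x m\,dx$ together with $\int m u_x\,dx=\int(u-u_{xx})u_x\,dx=0$, I get $\frac{d}{dt}\int m\,dx=(1-b)\int m u_x\,dx=0$, so $\int m\,dx$ is conserved; since $m\ge0$ this gives $\|m(\cdot,t)\|_{L^1}=\int m_0\,dx=\bigl|\int u_0\,dx\bigr|<\infty$, finite by the hypothesis $u_0\in L^1$. Writing $u=\frac{1}{2}e^{-|\cdot|}\ast m$, Young's inequality yields $\|u(\cdot,t)\|_{L^\infty}\le\frac{1}{2}\|m(\cdot,t)\|_{L^1}=\frac{1}{2}\|m_0\|_{L^1}$. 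For the derivative I would use the explicit convolution to write $u-u_x=\int_{-\infty}^{x}e^{-(x-y)}m(y)\,dy\ge0$ and $u+u_x=\int_{x}^{\infty}e^{-(y-x)}m(y)\,dy\ge0$, so that $|u_x|\le u=|u|$ pointwise. Combining the two, $\|u_x(\cdot,t)\|_{L^\infty}\le\frac{1}{2}\|m_0\|_{L^1}$ uniformly in $t$, which contradicts the blow-up alternative and forces $T^\ast=\infty$.

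I expect the genuinely delicate point to be the blow-up criterion and the propagation of the full $H^s$ norm rather than the a priori bounds themselves: showing that control of $\int_0^{T}\|u_x\|_{L^\infty}\,dt$ closes a Grönwall inequality for $\|u(\cdot,t)\|_{H^s}$ requires commutator (Kato--Ponce type) estimates on the nonlinearity, and one must also check that the characteristic argument is legitimate at exactly the regularity $s>3/2$ where $u\in C^1$. The hypothesis $b\ge1$ enters precisely in this energy estimate and in the corresponding blow-up scenario of the cited global existence theorem, where the $b$-dependent nonlinear coefficients must be handled so that the wave-breaking criterion in terms of $\|u_x\|_{L^\infty}$ is available; once that criterion and the pointwise inequality $|u_x|\le|u|$ are in hand, the estimates above close the argument.
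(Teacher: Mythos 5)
The paper offers no argument of its own for this lemma: its entire ``proof'' is the citation to Gui--Liu--Tian \cite{GLT}. What you have written is, in outline, exactly the argument of that reference, so in substance you have reconstructed the proof the paper delegates to the literature: Kato-type local well-posedness with a blow-up alternative in terms of $\|u_x\|_{L^\infty}$, propagation of the sign of $m$ through the identity $m(q(x,t),t)\,q_x(x,t)^b=m_0(x)$ with $q_x=\exp\bigl(\int_0^t u_x(q,\sigma)\,d\sigma\bigr)>0$, conservation of $\int m\,dx$ (your computation $\tfrac{d}{dt}\int m\,dx=(1-b)\int m u_x\,dx=0$ is correct, since $\int(u-u_{xx})u_x\,dx=0$), the convolution inequalities $u\pm u_x\ge 0$ yielding $\|u_x\|_{L^\infty}\le\|u\|_{L^\infty}\le\tfrac{1}{2}\|m_0\|_{L^1}$, and the resulting contradiction with wave breaking. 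The $L^1$ hypothesis is also used correctly, via $\int m_0\,dx=\int u_0\,dx$.

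Two caveats, one of which is a genuine gap in a blind proof. First, at the stated regularity $s>3/2$ the momentum $m$ lies only in $H^{s-2}$, so for $s<2$ it is in general a distribution rather than a function, and the sign hypothesis really says that $m_0$ is a finite signed-definite Radon measure; your pointwise differentiation of $m(q(x,t),t)\,q_x^b$ along trajectories is then not directly meaningful. You flag this issue but do not resolve it: the standard remedy is to carry out the characteristics computation for smooth data (say $u_0\in H^3$, where $m\in H^1\subset C_0$ justifies both the ODE argument and the integration by parts for $\tfrac{d}{dt}\int m\,dx$) and then pass to $s>3/2$ by mollification -- a nonnegative mollifier preserves the sign of $m_0$ and the value of $\int m_0\,dx$ -- combined with continuity of the data-to-solution map. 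Second, nothing in your argument visibly uses $b\ge 1$: the sign propagation holds for every real $b$ because $q_x^b>0$, and the conservation of $\int m\,dx$ holds for every $b$. Your closing remark that $b\ge 1$ ``enters in the blow-up scenario of the cited global existence theorem'' is circular, since that theorem is precisely the statement to be proved; in \cite{GLT} the restriction is tied to their blow-up analysis and to justifying the time-uniform control of $m$ in $L^1$ (via $\int m_0\,q_x^{1-b}\,dx$, where the sign of the exponent $1-b$ matters before the a priori bound on $u_x$ is available). In a self-contained proof you would need either to locate exactly where your chain of estimates requires $b\ge 1$, or to justify the steps you sketched (the integration by parts, the blow-up alternative) rigorously enough to show the hypothesis is dispensable for this route; as written, that deferral is the one unfinished point.
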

\begin{proof}
See {\cite{GLT}} for the proof. 
\end{proof}

\section{Singularity formation for the $b$-family equation}
\label{singularity}
In this section we present the analysis of the singularity formation for the
$b$-family equation using the singularity tracking method for different values
of the parameter $b$. The singularity tracking method is based on the following
Laplace asymptotic formula (see {\cite{CKP66}}) for the Fourier spectrum of a
complex function $u (z)$:
\begin{equation}
  \label{laplace} \hat{u}_k \sim C |k|^{- (1 + \alpha)} \exp (- \delta |k|)
  \exp (i|k|x^{\star}),
\end{equation}
where $z^{\star} =x^{\star} + i \delta$ is the nearest complex singularity to
the real axis,
$\alpha$ is the algebraic type, and at the singularity $u (z) \sim (z -
z^{\star})^{\alpha}$.
Therefore an estimation of $\delta$ and $x^*$ gives the complex location of the
singularity.
Resolving the rate of algebraic decay $1+\alpha$, one can classify the
singularity type.
In particular, when one deals with a PDE with solution $u(t)$,
if at a critic time $t_s$ results that $\delta (t_s) = 0$ then the
width of analyticity of the solution is zero and one can observe a blow up of
the solution or a blow up of its derivatives. The reader is invited to
see {\cite{C93,DLSS06,FMB03,GSS09,PS98,S92,SSF83}} for more details on this
method.

To apply the singularity tracking method to the $b$-family, we numerically solve
the equations
in the spatial domain $[-\pi,\pi]$, employing periodic boundary conditions. 
We write the $b$-family equations in pseudodifferential
form
\[ u_t + uu_x = - A^{- 2}  \left( \frac{b}{2} u^2 + \frac{3 - b}{2} u_x^2
   \right)_x , \]
and the dynamic of the $k$th Fourier mode of $u$ is described by the following
ODE
\begin{equation}
  \label{numode} \partial_t  \hat{u}_k = - \left[ \widehat{(uu_x)}_k +
  \frac{ik}{1 + k^2}  \left( \frac{b}{2} \widehat{(u^2)}_k + \frac{3 - b}{2}
  \widehat{(u_x^2)}_k \right) \right] .
\end{equation}
Dividing the time interval $[0, T]$ in N sub--intervals of size $\Delta t = T
/ N$, we approximate the solution $u$ as
\[ u (x, n \Delta t) \approx \sum_{k = - K / 2}^{K / 2} \hat{u}_k^n e^{ikx} ;
\]
and we solve the system of ODEs (\ref{numode}) using explicit Runge--Kutta
method of the 4th order with initial conditions given by
\[ \hat{u}_k^0 = \hat{u}_{0_k}, \]
where $\hat{u}_{0_k}$ are the Fourier coefficients of the initial data $u (x, 0)
=
u_0 (x)$.
The reader is referred to {\cite{DLSS06}} for the analysis of the convergence
properties 
of this numerical scheme for the Camassa--Holm equation case, which is the
$b$-family equation with
 $b= 2$.

The estimation of the parameters $C$, $\alpha$ and $\delta$ in the
Laplace asymptotic formula (\ref{laplace}) is performed with
the technique of the sliding--fitting  with length $3$ (see
{\cite{C93,DLSS06,PMFB06,PS98,S92}} for details). This
procedure is based on the local estimation of the wanted values for each $k$
mode, by the
formulas:
\begin{eqnarray}
  \alpha (k) & = & \frac{\log \left( \frac{\hat{u}_{k - 1}  \hat{u}_{k +
  1}}{\hat{u}^2_k} \right)}{\log \left( \frac{k^2}{\left( k - 1 \right) 
  \left( k + 1 \right)} \right)}, \\
  \delta (k) & = & \left[ \log \left( \frac{\hat{u}_k}{\hat{u}_{k + 1}}
  \right) + \alpha \log \left( \frac{k}{k + 1} \right) \right], \\
  \log C (k) & = & \log | \hat{u}_k | + \alpha (k) \log \left( k \right) + k
  \delta (k) . 
\end{eqnarray}
The asymptotic values  for large $k$ can be computed with an
extrapolation process using the epsilon algorithm of Wynn (see
{\cite{PMFB06}}), giving the wanted values $C$, $\alpha$ and $\delta$ .

For all the numerical simulations we shall present in this work, we have
 used up to 4096 Fourier modes calculated with 32-digit(using the ARPREC
package\cite{BY}).

\subsection{Numerical solutions}
\label{numerical}
For the numerical results we consider two different initial data: the first
one is $u_0 = \sin (x)$ and the second one is $u_0 = 1 + \sin (x)$, and in the
sequel we call them type I e II respectively. These initial conditions does not
 satisfy the hypothesis of theorem 2, therefore the
 solutions are candidate to develop a singularity in a finite
time. This work want to extend the results found in {\cite{DLSS06}}, where the
singularities formation for Camassa--Holm equation was analyzed. In
particular, for initial condition of type I, Camassa--Holm equation develops a
singularity in a finite time of algebraic type 3/5, weaker with respect to the
cubic--root singularity developed by Burgers equation {\cite{SSF83}}, probably
due to the presence of the dispersive terms. For initial condition of type II,
Camassa--Holm equation develops in a finite time singularity of algebraic type
2/3, which is the behavior of the peakons solution for Camassa--Holm equation.

Here we present the numerical results of the
Degasperi--Procesi equation, which corresponds to the $b$-family equation with
$b= 3$ with respect
to initial data of type I and II.

\begin{figure}[ht]
  \resizebox{11cm}{5cm}{\includegraphics{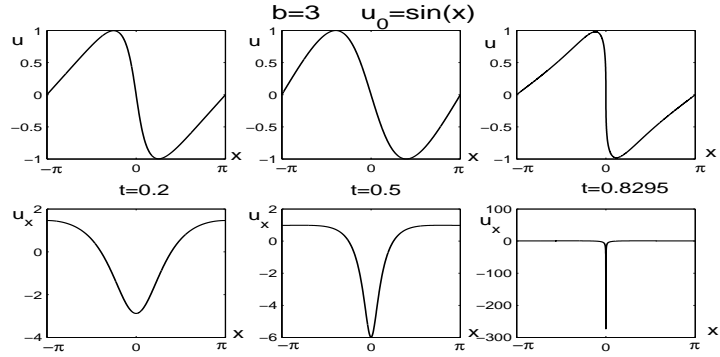}}
  \caption{The behavior in time of the numerical solution of
  Degasperi--Procesi equation with initial condition of type I. At time $t_s  =
  0.8295$ a singularity forms as a blow up of the first derivative.}
  \label{ub3sin}
\end{figure}

\begin{figure}[ht]
  \resizebox{11cm}{5cm}{\includegraphics{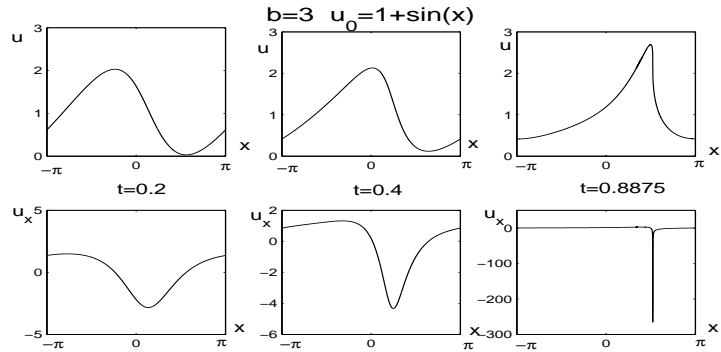}}
  \caption{The behavior in time of the numerical solution of
  Degasperi--Procesi equation with initial condition of type II. At time $t_s 
  = 0.8875$ a singularity forms as a blow up of the first derivative.}
  \label{ub31psin}
\end{figure}

The behaviors in time of the numerical solutions and their derivatives of the
Degasperi--Procesi equation with initial condition  I and II, are shown
respectively in Fig.\ref{ub3sin} and Fig.\ref{ub31psin}. The critical times
when singularities develop are respectively $t_s = 0.8295$ and $t_s =
0.8875$. At these critical times, it is evident a blow up of the first
derivatives of the solutions.

The analysis of the singularities formation and their characterization is
remanded in the sequel, where we perform a complete analysis for all the
$b$-family equations.

\section{Classifications of complex singularities: initial data $u_0 = \sin
(x)$.}
\label{numerical_I}
In this section we report the results obtained by applying the singularity
tracking method
to the $b-$family equation with initial datum of type I.

In Fig.\ref{spettrob3sin} it is shown the behavior in time of the
spectrum  of
the Degasperi--Procesi numerical solution ($b=3$),
starting at time $t = 0.35$ up to singularity time $t_s = 0.8295$ with
increments of $0.05$: at critical time $t_s$ the spectrum totally loss
exponential decay, as a singularity  of
cubic--root type hits the real axis, with the blow--up of the first derivative
of the solution.
The extrapolation of $\delta(t)$ and $\alpha(t)$ (on 
Fig.\ref{fittb3sin}) is obtained by the sliding 
fitting procedure of length $3$

\begin{figure}[ht]
  \resizebox{11cm}{5cm}{\includegraphics{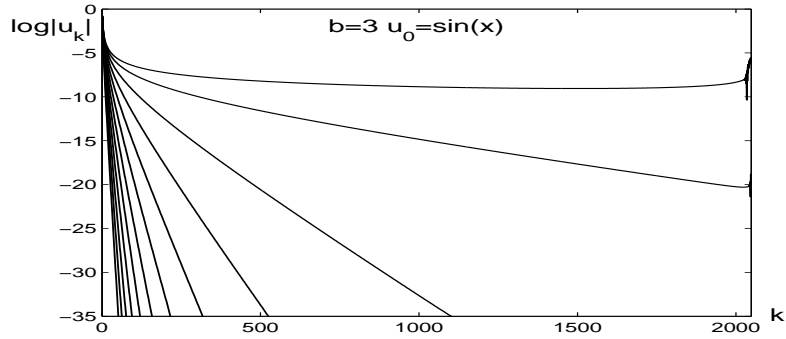}}
  \caption{The behavior of the spectrum in time of the
Degasperi--Procesi
  equation numerical solution with initial datum of type I, starting at time
  $t = 0.35$ up to singularity time $t_s = 0.8295$ with increments of $0.05$.}
  \label{spettrob3sin}
\end{figure}

\begin{figure}[ht]
  \resizebox{11cm}{5cm}{\includegraphics{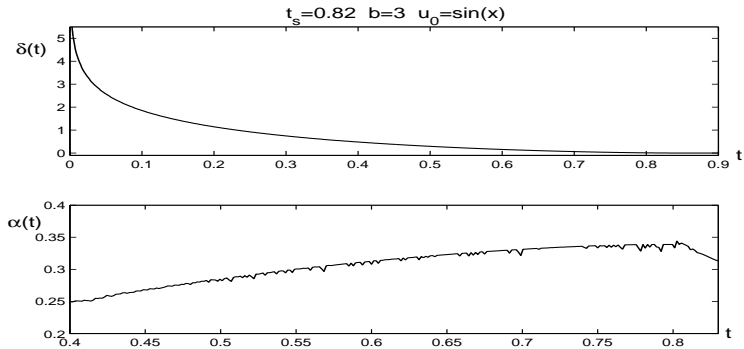}}
  \caption{The tracking singularity method results of the
Degasperi--Procesi
  equation numerical solution with initial datum of type I.
On the top,
  the evolution in time of the width of the analyticity strip $\delta$. On the
  bottom, the evolution in time of the algebraic character $\alpha$. 
The algebraic character at singularity time is $\alpha(t_s)=0.33$.}
  \label{fittb3sin}
\end{figure}

We have also performed several other numerical simulations for the initial
condition I, 
by varying the parameter $b>-1$. We have obtained that in each case 
a singularity develops in a finite time, and in Fig.\ref{tempisin} and Fig.\ref{alphasin} the
algebraic
character of the singularities and the times of singularity formation are shown
with respect to the
parameter $b$. We remember that for $b=-1$ the solution for the initial datum
of type I is
a traveling wave solution, and therefore in this case the solution is globally
analytic in time.

\begin{figure}[ht]
  \resizebox{11cm}{5cm}{\includegraphics{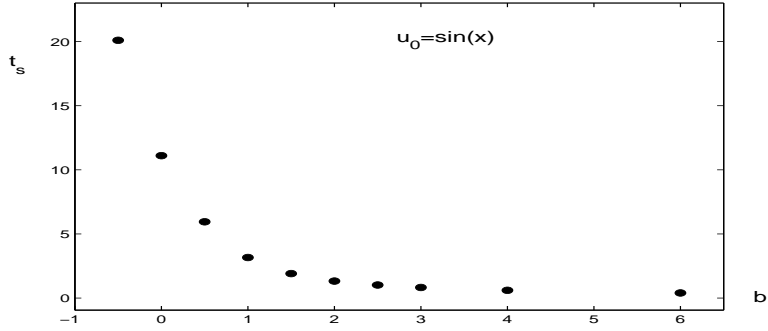}}

  \caption{The times of singularity for the $b$-family equation
with initial
  condition of type I with respect to the parameter $b$. On the right, the
singularity 
  algebraic characters $\alpha$ for the $b$-family
  equation with initial condition of type I with respect to the parameter
  $b$.}
  \label{tempisin}
\end{figure}

\begin{figure}[ht]
  \resizebox{11cm}{5cm}{\includegraphics{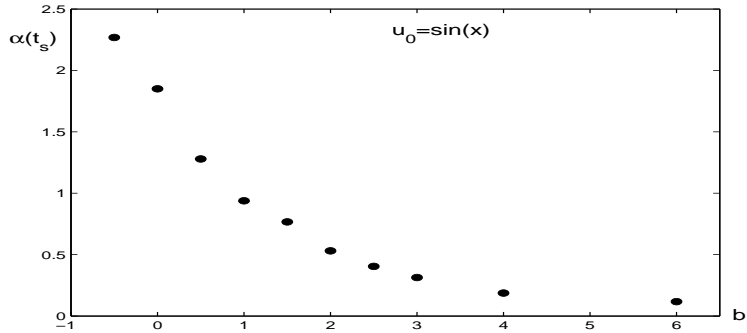}}

  \caption{The
singularity 
  algebraic characters $\alpha$ for the $b$-family
  equation with initial condition of type I with respect to the parameter
  $b$.}
  \label{alphasin}
\end{figure}

\section{Classifications of complex singularities: initial data $u_0 = 1 +
\sin (x)$.}
\label{numerical_II}
In this section we report the results obtained by applying the singularity
tracking method
to the $b$-family equation with initial datum of type II.

In Fig.\ref{spettrob31psin}  it is shown the behavior in time
of the spectrum 
of of the Degasperi--Procesi numerical solution,
starting at time $t = 0.4$ up to singularity time $t_s = 0.8875$ with
increments of $0.05$. On Fig.\ref{fittb31psin}
the evolutions in time of the width of the
analyticity strip $\delta$ and the algebraic character $\alpha$ are shown. 
One can see that at the singularity time $t_s = 0.8875$ the algebraic
character is $\alpha (t_s) = 0.41$, meaning that a blow--up of the first
derivative of the solution occurs.
The results are also obtained by applying the sliding fitting procedure with length $3$.

\begin{figure}[ht]
  \resizebox{11cm}{5cm}{\includegraphics{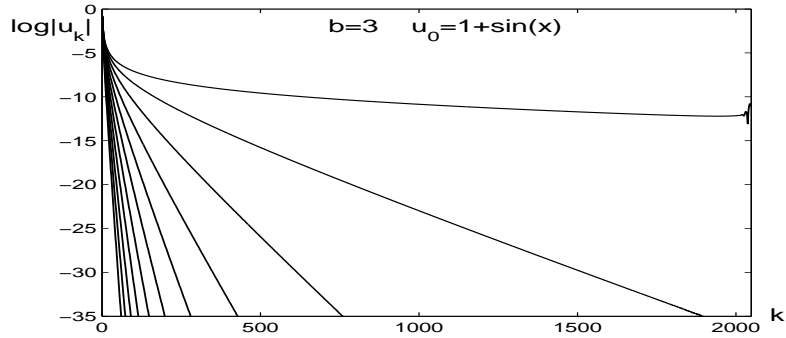}}
  \caption{The behavior in time of the spectrum  of the
Degasperi--Procesi
  equation numerical solution with initial datum of type II,starting at time
  $t = 0.4$ up to singularity time $t_s = 0.8875$ with increments of $0.05$.}
  \label{spettrob31psin}
\end{figure}

\begin{figure}[ht]
  \resizebox{11cm}{5cm}{\includegraphics{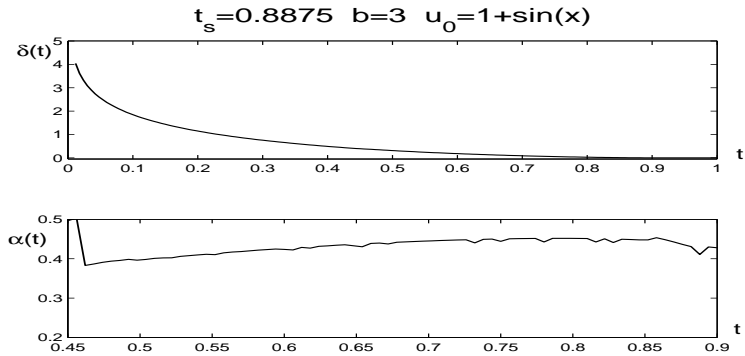}}
  \caption{The tracking singularity method results of the
Degasperi--Procesi
  equation numerical solution with initial datum of type II up to singularity time $t_s = 0.8875$. 
On the top, the evolution in time of the width of the analyticity strip $\delta$. On the
  bottom, the evolution in time of the algebraic character $\alpha$. 
The algebraic character at singularity time is $\alpha(t_s)=0.41$.
}
  \label{fittb31psin}
\end{figure}

Also for the initial condition of type II we have  performed several other
numerical simulations  
by varying the parameter $b$. The singularity time and the algebraic
character of the singularities are shown in Fig.\ref{tempi1psin} and Fig.\ref{alpha1psin}
w.r.t the
parameter $b$. As observed in the case of the initial condition of type I, for
$b= - 1$ the solution 
for the initial data of type II is a traveling wave solution, and therefore the
solution is globally analytic in time.

\begin{figure}[ht]
  \resizebox{11cm}{5cm}{\includegraphics{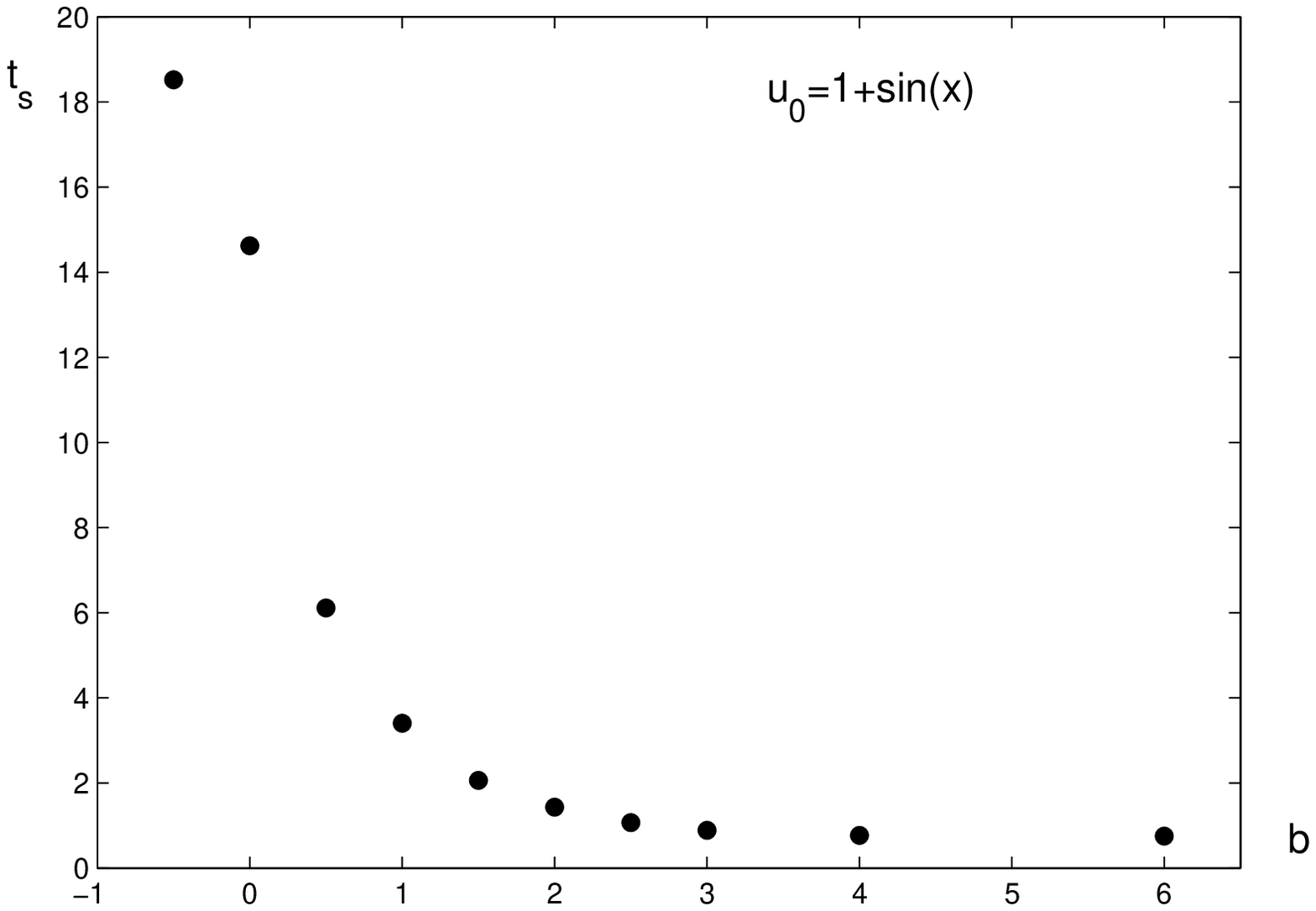}}

  \caption{The times of singularity for the $b$-family equation
with initial
  condition of type II with respect to the parameter b.}
  \label{tempi1psin}
\end{figure}

\begin{figure}[ht]
  \resizebox{11cm}{5cm}{\includegraphics{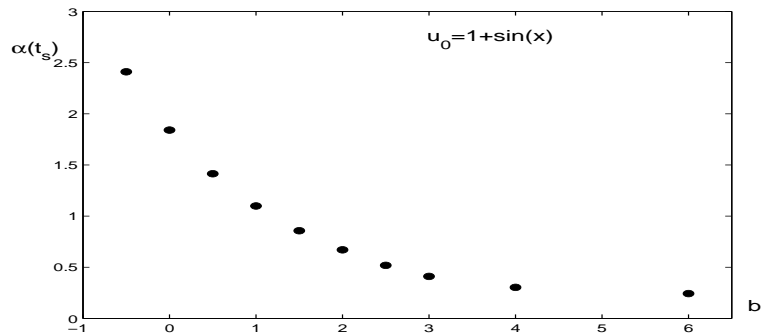}}

  \caption{The
singularity algebraic characters $\alpha$ for the $b$-family
  equation with initial condition of type II with respect to the parameter
  b.}
  \label{alpha1psin}
\end{figure}

\section{Conclusions}
\label{Conclusion}
In this paper we have considered the $b$-family equations, introduced by Holm
and Staley \cite{HS1,HS2}.
Our main interest has been to show the well-posedness of the $b$-family equation
in analytic
function spaces. Using the Abstract Cauchy-Kowalewski Theorem, we prove that the
$b$-family
equation admits a unique analytic solution, 
both locally in time (Theorem \eqref{primoteo}) then globally in time (Theorem
\eqref{seconteo}).
An interesting problem in the theory of PDE’s is the singularity
formation and how to detect it from a numerical point of view. One of the most
effective method to achieve this goal is the singularity tracking method. It
consists to follow the singularity in the complex
plane before the appearance as a blow up of the solution. The distance of the
singularity from the
real axis can be estimated through the study of the exponential rate of decay of
the Fourier spectrum. 
The algebraic character of the singularity is determinate, instead, by the the
study of the algebraic rate of decay of the Fourier spectrum. We use numerical
pseudo–spectral methods
to study the process of singularity formation for the $b$-family equations,
w.r.t the
parameter $b>-1$, and the singularity time and the algebraic
characters are shown in Sections \ref{numerical_I} and \ref{numerical_II}.
We analysed also the cases for $b<-1$ and also in these cases the solutions
develop a singularity, but it was complicated to determinate their algebraic
character. Very high numeric precision is needed and it will be the subject of
future work.




\end{document}